\documentclass[12pt, reqno]{amsart}
\usepackage{amssymb, amsmath, amsthm, graphicx}
\usepackage[colorlinks=true, citecolor=blue, urlcolor=blue]{hyperref}

\newtheorem{thrm}{Theorem}
\newtheorem{lemma}{Lemma}
\newtheorem{probl}{Problem}

\textheight220truemm
\textwidth180truemm
\hoffset=-20mm
\voffset=-20mm

\title[On the existence of two affine-equivalent frameworks]{On the existence of two affine-equivalent frameworks with prescribed edge lengths in Euclidean $d$-space} 
\author{Victor Alexandrov}
\address{Sobolev Institute of Mathematics, Koptyug ave., 4, Novosibirsk, 630090, Russia
and Department of Physics, Novosibirsk State University, Pirogov str., 2, Novosibirsk,
630090, Russia\newline 
\indent \href{https://orcid.org/0000-0002-6622-8214}{\rm{ORCID iD: 0000-0002-6622-8214}}}
\email{alex@math.nsc.ru}
\date{June 24, 2023}

\begin{document}

\begin{abstract}
We study the problem of existence of two affine-equivalent
bar-and-joint frameworks in Euclidean $d$-space
which have prescribed combinatorial structure and edge lengths.
We prove that theoretically this problem is always solvable, but we cannot
propose any practical algorithm for its solution.
\par
\textit{Keywords}:  Euclidean $d$-space, graph, bar-and-joint framework, 
affine-equivalent frameworks, Cayley--Menger determinant, Cauchy rigidity theorem.
\par
\textit{Mathematics subject classification (2010)}: 52C25, 05C62, 68U05.
\end{abstract}
\maketitle

\section{Introduction}\label{sec1}

Given a graph $G$, we denote by $E(G)$ and $V(G)$ the sets of its edges
and vertices respectively.

A \textit{bar-and-joint framework} in $\mathbb{R}^d$, $d\geqslant 1$,  
is a graph $G$ and a mapping ${\bf p}: G\to \mathbb{R}^d$ which assigns a 
point ${\bf p}_i$ in $\mathbb{R}^d$ to each vertex $i$ of $G$ and is such that
${\bf p}_i\neq {\bf p}_j$ for all $(i,j)\in E(G)$.
The mapping $\bf{p}$ is called a \textit{configuration};
the point ${\bf p}_i$ is called its \textit{node}; and
the straight-line segment with end-points ${\bf p}_i$, ${\bf p}_j$ is called 
a \textit{bar} provided that $(i,j)$ is an edge of $G$.
A bar-and-joint framework is denoted by $G(\bf{p})$.
This terminology and notation is standard in the rigidity theory of 
bar-and-joint frameworks, see, e.\,g., \cite{CG22} and references given there.

In this article, we treat the configuration $\bf{p}$ as a geometric realization of $G$ and often abbreviate a ``bar-and-joint framework'' to a ``framework''.

We address ourselves to the following problem:

\begin{probl}\label{probl1}
Let $d\geqslant 1$ be a natural number, let $G$ be a graph, and 
let $\lambda, \lambda':E(G)\to (0,+\infty)$ be two functions
which assign real numbers $\lambda_{ij}$ and $\lambda'_{ij}$ to each 
edge $(i,j)$ of $G$.
Do there exist two frameworks $G(\bf{p})$ and $G(\bf{p'})$ in $\mathbb{R}^d$
such that 

{\em (a)} $|{\bf p}_i-{\bf p}_j|=\lambda_{ij}$ for every $(i,j)\in E(G)$,

{\em (b)} $|{\bf p'}_{\!\! i}-{\bf p'}_{\!\! j}|=\lambda'_{ij}$ 
for every $(i,j)\in E(G)$,

{\em (c)} $G(\bf{p})$ and $G(\bf{p'})$ are affine-equivalent, 
i.e., that there is an affine transformation of $\mathbb{R}^d$ such that 
$A({\bf p}_i)={\bf p'}_{\!\! i}$ for every $i\in V(G)$,

{\em (d)} $\operatorname{aff}({\bf p}(G))= \mathbb{R}^d$, where 
$\operatorname{aff}({\bf p}(G))$ is the affine hull of the set 
${\bf p}(G)= \{{\bf p}_i\in \mathbb{R}^d : i\in V(G)\}$?
\end{probl}

In other words, Problem~\ref{probl1} asks if we can find two affine-equivalent 
geometric realizations of $G$ with prescribed lengths of bars. 

Problem~\ref{probl1} generalizes the problem of recognition of affine-equivalent 
polyhedra in Euclidean 3-space via their natural developments.
The latter is studied in \cite{Al22}, \cite{Al23} and is motivated by the 
Cauchy Rigidity Theorem \cite{Al05}.
The arguments used in \cite{Al22}, \cite{Al23} rely significantly on the structure
of polyhedral surfaces in Euclidean 3-space and, thus, cannot be directly 
applied to the study of Problem~\ref{probl1}.
In this article, we propose an algorithm that solves Problem~\ref{probl1} for any input 
data, i.\,e., for any dimension $d$, any graph $G$, and any sets of prescribed 
lengths $\{\lambda_{ij}\}_{(i,j)\in E(G)}$, $\{\lambda'_{ij}\}_{(i,j)\in E(G)}$
of the bars.
The algorithm proposed give the assurance that Problem~\ref{probl1} is always solvable, 
rather than provides us with an effective tool for its practical solution.

\section{Prior results}\label{sec2} 

If $X$ is a set, we denote by $|X|$ the cardinality of $X$.
If $x$ is a vector in $\mathbb{R}^d$, we denote by $|x|$ the length of $x$.

A \textit{semimetric space} is a pair $(X,\rho)$, where $X$ is a set and 
$\rho:X\times X\to \mathbb{R}$ is a function such that, for all $x,y\in X$, 
the following properties hold true:  $\rho(x,y)=\rho(y,x)$,
$\rho(x,y)\geqslant 0$, and $\rho (x,y)=0$ if and only if $x=y$, 
see \cite[Definition 5.1]{Bl70}.
Such $\rho$ is called a \textit{semimetric} on $X$.

Given a semimetric space $(X,\rho)$ and its finite subset $Y=\{y_0, y_1,\dots, y_k\}$,
we put by definition $\rho_{ij}=\rho(y_i, y_j)$ and
\begin{equation}\label{eqn1}
\operatorname{cmd}(Y)\stackrel{\textrm{def}}{=}
\left|
\begin{array}{ccccc}
0 & 1             & 1             & \dots & 1              \\
1 & 0             & {\rho}^2_{01} & \dots & {\rho}^2_{0k}  \\
1 & {\rho}^2_{10} & 0             & \dots & {\rho}^2_{1k}  \\
. & .             & .             & .     & .              \\
1 & {\rho}^2_{k0} & {\rho}^2_{k1} & \dots & 0 
\end{array}
\right|.
\end{equation} 
The $(k+2)\times(k+2)$ determinant in (\ref{eqn1}) is called the 
\textit{Cayley--Menger determinant} of the set $Y$ or of the points 
$y_0, y_1,\dots, y_k$, see, e.\,g., \cite[Section 40]{Bl70}.

A mapping $f:X\to\mathbb{R}^d$ is called an 
\textit{isometric embedding} of a semimetric space $(X,\rho)$ into $\mathbb{R}^d$,
$d\geqslant 1$, if $|f(x)-f(y)|=\rho(x,y)$ for all $x,y\in X$.

\begin{thrm}[K. Menger]\label{thrm1}
Suppose $(X,\rho)$ is a semimetric space and $d\geqslant 1$ is an integer.
An isometric embedding $f:X\to\mathbb{R}^d$ such that 
$\operatorname{aff} f(X) = \mathbb{R}^d$ does exist 
if and only if the following statements hold true:
\begin{enumerate}
\renewcommand{\theenumi}{\roman{enumi}}
\item  $|X|\geqslant d+1$,
\item $(-1)^{|Y|}\operatorname{cmd}(Y)\geqslant 0$ for every $Y \subset X$
	such that $|Y|\leqslant d+1$,
\item $(-1)^{d+1}\operatorname{cmd}(Y_0)> 0$ for some $Y_0\subset X$ such that
	$|Y_0|=d+1$,
\item $\operatorname{cmd}(Y)=0$ for every $Y\subset X$ such that $|Y|=d+2$.  
\end{enumerate}
\end{thrm}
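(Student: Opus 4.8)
The plan is to translate both directions of the equivalence into linear algebra of the symmetric matrix built from the prescribed squared distances. For a finite set $Y=\{y_0,y_1,\dots,y_k\}$ write $\Gamma_Y$ for the $k\times k$ matrix with entries $(\Gamma_Y)_{ij}=\tfrac12(\rho_{0i}^2+\rho_{0j}^2-\rho_{ij}^2)$, $1\le i,j\le k$. A sequence of elementary row and column operations on the determinant in~(\ref{eqn1}) establishes the identity
\[
(-1)^{|Y|}\operatorname{cmd}(Y)=2^{k}\det\Gamma_Y ,
\]
valid for any symmetric array $(\rho_{ij})$ with zero diagonal, with no realizability assumed. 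Hence condition~(ii) says precisely that $\det\Gamma_Y\ge 0$ whenever $|Y|\le d+1$, condition~(iii) that $\det\Gamma_{Y_0}>0$ for some $Y_0$ with $|Y_0|=d+1$, and condition~(iv) that $\det\Gamma_Y=0$ whenever $|Y|=d+2$.

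\emph{Necessity} is then straightforward: if $f\colon X\to\mathbb R^d$ is an isometric embedding, $\Gamma_Y$ is the genuine Gram matrix of the vectors $f(y_1)-f(y_0),\dots,f(y_k)-f(y_0)$, hence positive semidefinite, which gives~(ii), and $\det\Gamma_Y>0$ exactly when $f(y_0),\dots,f(y_k)$ are affinely independent. If moreover $\operatorname{aff}f(X)=\mathbb R^d$, choosing $d+1$ affinely independent points of $f(X)$ gives~(i) and~(iii); and for any $d+2$ points of $X$ the $d+1$ vectors $f(y_i)-f(y_0)$ lie in $\mathbb R^d$, so are dependent, whence $\det\Gamma_Y=0$, i.e.~(iv).

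For \emph{sufficiency}, fix $Y_0=\{q_0,\dots,q_d\}$ witnessing~(iii) and take $q_0$ as base point. By~(ii) every principal minor of $\Gamma_{Y_0}$ of order $\le d$ is nonnegative, so $\Gamma_{Y_0}$ is positive semidefinite; by~(iii) it is nonsingular, hence positive definite, and a Cholesky factorization supplies linearly independent $\widehat q_1,\dots,\widehat q_d\in\mathbb R^d$ (with $\widehat q_0:=0$) such that $|\widehat q_i-\widehat q_j|=\rho(q_i,q_j)$ for all $i,j$; thus $Y_0$ is realized isometrically with affine hull $\mathbb R^d$. For each $x\in X\setminus Y_0$, since $\widehat q_1,\dots,\widehat q_d$ span $\mathbb R^d$ there is at most one $\widehat x\in\mathbb R^d$ with $|\widehat x-\widehat q_i|=\rho(x,q_i)$ for all $i$; applying~(iv) to $Y_0\cup\{x\}$ together with the identity above shows that the Schur complement of $\Gamma_{Y_0}$ in the bordered matrix $\Gamma_{Y_0\cup\{x\}}$ vanishes, which is exactly the consistency condition for such a point $\widehat x$ to exist, so it does. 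This produces a map $f$ on all of $X$ that already realizes every distance of the form $\rho(q_i,q_j)$ and $\rho(x,q_i)$.

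What remains — and what I expect to be the main obstacle — is to check that $|f(x)-f(x')|=\rho(x,x')$ for all $x,x'\in X\setminus Y_0$. Let $\Gamma$ denote the Schoenberg matrix of $(X,\rho)$ based at $q_0$. The $(d+2)\times(d+2)$ principal submatrix $\Gamma[\{1,\dots,d,x,x'\}]$ has the positive definite block $\Gamma_{Y_0}$, and its Schur complement with respect to that block is a symmetric $2\times2$ matrix whose diagonal already vanishes by the previous step; hence the desired equality is equivalent to the vanishing of its off-diagonal entry, that is, to $\det\Gamma[\{1,\dots,d,x,x'\}]=0$, i.e.\ to $\operatorname{cmd}(\{q_0,\dots,q_d,x,x'\})=0$ — a Cayley--Menger determinant of $d+3$ points, one size larger than those constrained by~(iv). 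The way through is a determinant argument, an induction on cardinality that uses the nonsingularity of $\Gamma_{Y_0}$ and the semimetric axioms (which exclude degenerate coincidences), showing that conditions~(ii)--(iv) force $\operatorname{cmd}(Z)=0$ for \emph{every} finite $Z\subset X$ with $|Z|\ge d+2$; equivalently, that $\Gamma$ is positive semidefinite of rank exactly $d$. Granting this, a Cholesky factorization of $\Gamma$ furnishes the coordinates of an isometric embedding $f\colon X\to\mathbb R^d$ with $\operatorname{aff}f(X)=\mathbb R^d$; and when $X$ is infinite the same $f$ works, since every distance to be verified already lies within a finite subset $Y_0\cup\{x,x'\}$.
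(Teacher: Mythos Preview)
The paper does not give its own proof of Theorem~\ref{thrm1}; it only cites Blumenthal's monograph. Your Schoenberg--Gram matrix route is the standard one, and your necessity argument is fine. You also put your finger on exactly the right place in sufficiency: for $x,x'\notin Y_0$ the equality $|f(x)-f(x')|=\rho(x,x')$ is equivalent to $\operatorname{cmd}(Y_0\cup\{x,x'\})=0$, a Cayley--Menger determinant of $d+3$ points, whereas hypothesis~(iv) controls only $(d+2)$-point subsets.

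However, the step you label ``the way through is a determinant argument, an induction on cardinality \dots\ Granting this'' cannot be carried out: as formulated in the paper, the theorem is actually false, so no such argument exists. For $d=1$ take $X=\{a,b,c,d\}$ with
\[
\rho(a,b)=\rho(b,c)=\rho(c,d)=\rho(d,a)=1,\qquad \rho(a,c)=\rho(b,d)=2.
\]
Every triple has side lengths $1,1,2$, hence is degenerate and has $\operatorname{cmd}=0$; conditions (i)--(iii) are trivially satisfied. Yet the Schoenberg matrix based at $a$ is
\[
\Gamma=\begin{pmatrix}1&2&-1\\ 2&4&2\\ -1&2&1\end{pmatrix},\qquad \det\Gamma=-16<0,
\]
so $(X,\rho)$ embeds isometrically in no Euclidean space, in particular not in $\mathbb{R}^1$. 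The classical statement in Blumenthal (Chapter~IV) requires, in addition to (iv), that $\operatorname{cmd}(Y)=0$ for every $(d+3)$-point subset $Y\subset X$ (equivalently, that every $(d+3)$-point subset embed in $\mathbb{R}^d$). With that extra hypothesis your Schur-complement argument goes through immediately, because the equality $\operatorname{cmd}(Y_0\cup\{x,x'\})=0$ you need is then an assumption, not something to be derived. So the gap you detected is genuine and unfillable; what needs fixing is the hypothesis, not your proof.
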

 
We have formulated this theorem, first proved by K. Menger in 1928, 
in a form convenient for our purposes. 
We refer to \cite[Chapter IV]{Bl70} for its proof, and to \cite{HL19}
for its applications and alternative formulations.

Below we will make use of the following well-known fact:
If $X=\{x_0, x_1, \dots ,x_k\}\subset \mathbb{R}^d$,
$1\leqslant k\leqslant d$, and $\rho_{ij}=|x_i-x_j|$ for all $i,j=0,1,\dots,k$,
the $k$-dimensional volume $\operatorname{vol}_k (X)$ of the 
simplex $X\subset\mathbb{R}^d$ is related to the Cayley--Menger determinant of 
$X\subset (X,\rho)$ by the following formula
\begin{equation}\label{eqn2}
[\operatorname{vol}_k(X)]^2=
\frac{(-1)^{k+1}}{2^k k!}
\operatorname{cmd}(X),
\end{equation}
see, e.g., \cite[Section 40]{Bl70}.

\section{Auxiliary result}\label{sec3}

Suppose $d\geqslant 1$ is an integer, 
$K_{d+2}$ is the complete graph on $d+2$ vertices $i=0, 1, \dots, d+1$,
and ${\bf p}: {K_{d+2}}\to \mathbb{R}^d$ is a configuration such that
the dimension of the affine hull $\operatorname{aff}(\Delta)$ of the set 
$\Delta=\{{\bf p}_1, {\bf p}_2, \dots, {\bf p}_d\}$ is equal to $d-1$.
Define the metric $\rho$ on $K_{d+2}$ by the rule $\rho_{ij}=|{\bf p}_i-{\bf p}_j|$.

In this Section, our goal is to learn to distinguish between the following three cases 
using only the metric space $(K_{d+2}, \rho)$:
\begin{enumerate}
\renewcommand{\theenumi}{\Alph{enumi}}
\item ${\bf p}_0$ and ${\bf p}_{d+1}$ lie in the same open half-space 
bounded by the hyperplane $\operatorname{aff}(\Delta)$, 
\item ${\bf p}_0$ and ${\bf p}_{d+1}$ lie in $\operatorname{aff}(\Delta)$, 
\item ${\bf p}_0$ and ${\bf p}_{d+1}$ lie in different open half-spaces bounded by 
$\operatorname{aff}(\Delta)$.
\end{enumerate}

Observe that $\operatorname{cmd} (K_{d+1})$ is a quadratic polynomial with respect
to the variable $t=\rho^2_{0,d+1}$, whose coefficients are polynomials
with respect to $\rho_{ij}$, $\{i,j\}\neq \{0,d+1\}$.
Denote this polynomial as $Ut^2+Vt+W$.
It is straightforward to see that $U=-\operatorname{cmd}(\Delta)$.
Then, from (\ref{eqn2}) it follows that
$U=(-1)^{d-1}2^{d-1}(d-1)![\operatorname{vol}_{d-1}(\Delta)]^2$.
Hence, $U<0$ if $d$ is even, and $U>0$ if $d$ is odd.

\begin{lemma}\label{lemma1}
Using the notation introduced above, we can assert that

{\scriptsize $\bullet$} the case {\em (A)} holds true if and only if
\begin{equation}\label{eqn3}
(-1)^d(2U\rho^2_{0,d+1}+V)>0,
\end{equation}

{\scriptsize $\bullet$} the case {\em (B)} holds true if and only if 
\begin{equation}\label{eqn4} 
2U\rho^2_{0,d+1}+V=0,
\end{equation}

{\scriptsize $\bullet$} the case {\em (C)} holds true if and only if 
\begin{equation}\label{eqn5}
(-1)^d(2U\rho^2_{0,d+1}+V)<0.
\end{equation} 
\end{lemma}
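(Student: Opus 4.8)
The plan is to exploit the fact that $\operatorname{cmd}(K_{d+1})$, viewed as a function of the single squared distance $t=\rho^2_{0,d+1}$ with all other distances fixed, is (up to a positive constant) the squared $(d+1)$-dimensional volume that would be swept out if $\mathbf{p}_0$ and $\mathbf{p}_{d+1}$ were the two apexes of a bipyramid over the $(d-1)$-simplex $\Delta$. Concretely, fix $\mathbf{p}_1,\dots,\mathbf{p}_d$ spanning the hyperplane $H=\operatorname{aff}(\Delta)$, and let $h_0,h_{d+1}\geqslant 0$ be the distances of $\mathbf{p}_0,\mathbf{p}_{d+1}$ from $H$, with $\varepsilon_0,\varepsilon_{d+1}\in\{+1,-1\}$ recording which side of $H$ each lies on. By formula (\ref{eqn2}) applied to the $(d+1)$-point set $\{\mathbf{p}_0,\dots,\mathbf{p}_{d+1}\}$, we have $\operatorname{cmd}(K_{d+1})=(-1)^{d}2^{d+1}(d+1)!\,[\operatorname{vol}_{d+1}(\{\mathbf{p}_i\})]^2$, but this vanishes identically because $d+1$ points in $\mathbb{R}^d$ are affinely dependent; that is the content of condition (iv) in Menger's theorem and it is not quite what I want. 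Instead I would keep the configuration abstract: since $U=-\operatorname{cmd}(\Delta)\neq 0$, the quadratic $Ut^2+Vt+W$ genuinely depends on $t$, and its derivative in $t$ is $2Ut+V$, so the quantities in (\ref{eqn3})–(\ref{eqn5}) are exactly $(-1)^d\,\frac{d}{dt}\operatorname{cmd}(K_{d+1})$ evaluated at $t=\rho^2_{0,d+1}$.

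First I would reduce to a computation in which $\Delta$ and $\mathbf{p}_0$ are held fixed and only $\mathbf{p}_{d+1}$ moves. Place $H=\operatorname{aff}(\Delta)$ as the hyperplane $x_d=0$ and write $\mathbf{p}_{d+1}=(\mathbf{q},s)$ with $\mathbf{q}\in H$ and $s\in\mathbb{R}$; then $t=\rho^2_{0,d+1}=|\mathbf{q}-\mathbf{p}_0'|^2+(s-h_0\varepsilon_0)^2$ where $\mathbf{p}_0=(\mathbf{p}_0',h_0\varepsilon_0)$. The key algebraic identity is that $\operatorname{cmd}(K_{d+1})$ factors, up to the known positive constant $2^{d+1}(d+1)!$ and sign $(-1)^d$, as a product involving $[\operatorname{vol}_d(\{\mathbf{p}_1,\dots,\mathbf{p}_d,\mathbf{p}_0\})]^2$ and a term measuring how far $\mathbf{p}_{d+1}$ sits from the *affine* hull of those $d$ points — but since all $d+1$ of $\mathbf{p}_0,\dots,\mathbf{p}_{d+1}$ are forced into $\mathbb{R}^d$, the right object is to treat $\operatorname{cmd}(K_{d+1})$ as a polynomial and differentiate. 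I would instead observe directly: by the bordered-determinant expansion, $\frac{\partial}{\partial(\rho^2_{0,d+1})}\operatorname{cmd}(K_{d+1})$ equals (twice) the cofactor of the $(\rho^2_{0,d+1})$-entry, which is itself a Cayley–Menger-type determinant built from $\Delta$ and one of the apexes; tracking its sign via (\ref{eqn2}) and the geometric meaning of the $(d-1)$-volume of $\Delta$ together with the signed heights $\varepsilon_0 h_0$, $\varepsilon_{d+1}h_{d+1}$ gives that $2U\rho^2_{0,d+1}+V$ has sign $(-1)^d\varepsilon_0\varepsilon_{d+1}$ (times the positive quantity $2^{d}(d-1)!\,h_0 h_{d+1}[\operatorname{vol}_{d-1}(\Delta)]^2$, say), which is positive exactly in case (A) ($\varepsilon_0\varepsilon_{d+1}=+1$, both heights nonzero), zero exactly in case (B) (some height zero), and negative exactly in case (C) ($\varepsilon_0\varepsilon_{d+1}=-1$). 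Multiplying through by $(-1)^d$ yields (\ref{eqn3})–(\ref{eqn5}).

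An alternative, perhaps cleaner, route: since the three cases are mutually exclusive and jointly exhaustive, it suffices to prove the three "if" directions (or the three "only if" directions). For each of (A), (B), (C) I would compute $2U t+V$ at $t=\rho^2_{0,d+1}$ using an explicit normal-form configuration — $\Delta$ a standard $(d-1)$-simplex in $x_d=0$, $\mathbf{p}_0=(\text{centroid},a)$, $\mathbf{p}_{d+1}=(\text{centroid},b)$ — and check that the sign of $2Ut+V$ is governed by $\operatorname{sign}(ab)$, since along the line $\{x_d=b\}$ the function $t\mapsto\operatorname{cmd}(K_{d+1})$ is a downward-or-upward parabola (sign of leading coefficient $U$ known) whose vertex lies at the $t$-value realized when $\mathbf{p}_{d+1}$ is the mirror image of $\mathbf{p}_0$ across $H$ translated appropriately; being to one side or the other of the vertex is precisely the sidedness condition. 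Because all quantities $U$, $V$ and $\rho^2_{0,d+1}$ are determined by the metric $\rho$ alone, the resulting inequalities are metric-intrinsic, which is what the lemma asserts.

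The main obstacle I anticipate is bookkeeping the sign in the cofactor/derivative computation: disentangling the sign $(-1)^d$ coming from formula (\ref{eqn2}) for an odd-versus-even number of points, the sign of $U$ (which itself flips with the parity of $d$), and the geometric sign $\varepsilon_0\varepsilon_{d+1}$, so that they combine to give exactly (\ref{eqn3})–(\ref{eqn5}) with a uniform $(-1)^d$ prefactor. A convenient sanity check is the case $d=1$: then $\Delta=\{\mathbf{p}_1\}$ is a point, $\operatorname{vol}_0(\Delta)=1$, $U>0$, $\mathbf{p}_0,\mathbf{p}_2\in\mathbb{R}$, and "(A) same side" means $\mathbf{p}_0,\mathbf{p}_2$ are on the same side of $\mathbf{p}_1$ on the line; one verifies by hand that $2U\rho_{02}^2+V>0$ in that case, fixing all signs. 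Once the $d=1$ (and $d=2$) base cases pin down the constants, the general computation is routine linear algebra with the bordered Cayley–Menger matrix.
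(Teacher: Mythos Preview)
Your plan is workable in principle but takes a much harder road than the paper does, and one of your two proposed routes has a real gap.

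The paper's argument is almost immediate once you notice two things you never exploit: first, because the configuration sits in $\mathbb{R}^d$, condition (iv) of Theorem~\ref{thrm1} already tells you that $t_1=\rho^2_{0,d+1}$ is a \emph{root} of $Ut^2+Vt+W$; second, the \emph{other} root $t_2$ is the squared distance $|\mathbf{p}_0-\widetilde{\mathbf{p}}_{d+1}|^2$, where $\widetilde{\mathbf{p}}_{d+1}$ is the reflection of $\mathbf{p}_{d+1}$ in $\operatorname{aff}(\Delta)$ (this reflection fixes every $\rho_{i,d+1}$ with $1\leqslant i\leqslant d$). Vieta then gives $2Ut_1+V=U(t_1-t_2)$, and the sign of $t_1-t_2$ is geometrically obvious: in case~(A) reflection moves $\mathbf{p}_{d+1}$ to the far side of the hyperplane from $\mathbf{p}_0$, so $t_1<t_2$; in case~(C) the opposite; in case~(B) the roots coincide. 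Combined with the known sign of $U$ this yields (\ref{eqn3})--(\ref{eqn5}) with no cofactor bookkeeping whatsoever. Incidentally, the identity you are aiming for, $2Ut_1+V=(-1)^d\cdot(\text{positive})\cdot\varepsilon_0\varepsilon_{d+1}h_0h_{d+1}$, drops out of this instantly once you write $t_1-t_2=-4\varepsilon_0h_0\,\varepsilon_{d+1}h_{d+1}$.

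Your first route (compute the $t$-derivative of $\operatorname{cmd}$ as a bordered cofactor and chase signs) would eventually reproduce this, but the minor you get after deleting the row for vertex~$0$ and the column for vertex~$d{+}1$ is not itself a Cayley--Menger determinant, so formula~(\ref{eqn2}) does not apply directly; you correctly flag the sign-tracking as the main obstacle and then do not resolve it. Your second route has a genuine gap: putting $\mathbf{p}_0=(\text{centroid},a)$ and $\mathbf{p}_{d+1}=(\text{centroid},b)$ is not a normal form for the general metric, since in general the orthogonal projections of $\mathbf{p}_0$ and $\mathbf{p}_{d+1}$ onto $\operatorname{aff}(\Delta)$ are different points, so a computation in that special case does not establish the lemma. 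Also, the assertion that the vertex of the parabola lies at ``the $t$-value realized when $\mathbf{p}_{d+1}$ is the mirror image of $\mathbf{p}_0$'' is incorrect: the vertex is at $-V/(2U)=(t_1+t_2)/2$, the average of the two realizable values of $\rho^2_{0,d+1}$, which has nothing to do with mirroring $\mathbf{p}_0$.
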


\begin{proof}
Since $(K_{d+2}, \rho)$ embeds isometrically in $\mathbb{R}^d$,
the condition (iv) of Theorem~\ref{thrm1} holds true. 
Hence, $t_1=\rho^2_{0,d+1}$ is a root of $Ut^2+Vt+W$.

Suppose the case (A) holds true. Then, $Ut^2+Vt+W$ has one more 
positive root $t_2$, which corresponds to the configuration obtained from
${\bf p}$ by replacement of ${\bf p}_{d+1}$ by its image under the reflection
in the hyperplane $\operatorname{aff}(\Delta)$. This implies $t_1<t_2$. 
Thus, $2\rho^2_{0, d+1}=2t_1<t_1+t_2=-V/U$, i.\,e., (\ref{eqn3}) holds true.
Arguing backwards, we see that (\ref{eqn3}) implies (A).

Suppose the case (B) holds true. Then, $t_1$ is a root of multiplicity 2 of 
$Ut^2+Vt+W$. Thus, $2\rho^2_{0, d+1}=2t_1=t_1+t_2=-V/U$, i.\,e., (\ref{eqn4}) holds true. 
Arguing backwards, we see that (\ref{eqn4}) implies (B).

At last, suppose the case (C) holds true. Then, as before, $Ut^2+Vt+W$ has one more 
positive root $t_2$, which corresponds to the configuration obtained from
${\bf p}$ by replacement of ${\bf p}_{d+1}$ by its image under the reflection
in the hyperplane $\operatorname{aff}(\Delta)$. This implies $t_2<t_1$. 
Thus, $2\rho^2_{0, d+1}=2t_1>t_1+t_2=-V/U$, i.\,e., (\ref{eqn5}) holds true.
Arguing backwards, we see that (\ref{eqn5}) implies (C).
\end{proof}

\section{Main result}\label{sec4}

In this Section, we prove that a positive solution of Problem~\ref{probl1} is equivalent 
to the existence of a solution to a system of polynomial equalities and inequalities.
We start with a description of that system.

In accordance with the notation adopted in Problem~\ref{probl1},
suppose we are given a natural number $d\geqslant 1$, 
a graph $G$, and two functions $\lambda, \lambda':E(G)\to (0,+\infty)$.
With each pair of vertices $i,j\in V(G)$, even those that are not connected by an edge
of $G$, we associate two real variables $z_{ij}$ and $z'_{ij}$.
At the end of our solution of Problem~\ref{probl1} we will see that,
if the affine-equivalent configurations
${\bf p}, {\bf p'}: G\to \mathbb{R}^d$ exist, then
$z_{ij}=|{\bf p}_i-{\bf p}_j|^2$ and
$z'_{ij}=|{\bf p'}_{\!\! i}-{\bf p'}_{\!\! j}|^2$ for all $(i,j)\in V(G)$.

For every $I=\{i_0,i_1,\dots, i_k\}\subset V(G)$, we put by definition
$Z_I=\{z_{ij}: i,j\in I\}$, $Z'_I=\{z'_{ij}: i,j\in I\}$,
\begin{equation*}
{\operatorname{acmd}(Z_I)\stackrel{\textrm{def}}{=}
\left|
\begin{array}{ccccc}
0 & 1          & 1          & \dots & 1         \\
1 & 0          & z_{i_0i_1} & \dots & z_{i_0i_k}  \\
1 & z_{i_1i_0} & 0          & \dots & z_{i_1i_k}  \\
. & .          & .          & .     & .         \\
1 & z_{i_ki_0} & z_{i_ki_1} & \dots & 0 
\end{array}
\right|,}
\quad \textrm{and} \quad
{\operatorname{acmd}(Z'_I)\stackrel{\textrm{def}}{=}
\left|
\begin{array}{ccccc}
0 & 1           & 1           & \dots & 1         \\
1 & 0           & z'_{i_0i_1} & \dots & z'_{i_0i_k}  \\
1 & z'_{i_1i_0} & 0           & \dots & z'_{i_1i_k}  \\
. & .           & .           & .     & .         \\
1 & z'_{i_ki_0} & z'_{i_ki_1} & \dots & 0 
\end{array}
\right|.}
\end{equation*} 
We call $\operatorname{acmd}(Z_I)$ and $\operatorname{acmd}(Z'_I)$ the 
\textit{abstract Cayley--Menger determinants} of $Z_I$ and $Z'_I$ respectively.

Suppose $I=\{i_0,\dots, i_{d+1}\}\subset V(G)$.
Note that, for every $0\leqslant r\leqslant d$, 
$\operatorname{acmd}(Z_I)$ is a quadratic polynomial with respect 
to the variable $z_{i_r,i_{d+1}}$ whose coefficients are polynomials
with respect to the variables $z_{i_m,i_n}$, $\{i_m,i_n\}\neq \{i_r,i_{d+1}\}$.
Denote this polynomial by 
$U_{I,r}(z_{i_r,i_{d+1}})^2+V_{I,r}z_{i_r,i_{d+1}}+W_{I,r}$.
Similarly, $\operatorname{acmd}(Z'_I)$ is a quadratic polynomial in 
$z'_{i_r,i_{d+1}}$ whose coefficients are polynomials in $z'_{i_mi_n}$, 
$\{i_m,i_n\}\neq \{i_r,i_{d+1}\}$.
Denote this polynomial by 
$U'_{I,r}(z'_{i_r,i_{d+1}})^2+V'_{I,r}z'_{i_r,i_{d+1}}+W'_{I,r}$.

Now we can explicitly write the system of polynomial equalities and inequalities
(\ref{eqn6})--(\ref{eqn12}), involved in our solution of Problem~\ref{probl1}:
\begin{equation}\label{eqn6}
z_{ij}\geqslant 0 \textrm{\ and\ } z'_{ij}\geqslant 0 \quad 
\textrm{for all\ } i,j\in V(G);
\end{equation} 
\begin{equation}\label{eqn7}
z_{ij}=(\lambda_{ij})^2  \textrm{\ and\ } z'_{ij}=(\lambda'_{ij})^2 \quad 
\textrm{for all\ } (i,j)\in E(G);
\end{equation}
\begin{equation}\label{eqn8}
(-1)^{|I|}\operatorname{acmd}(Z_I)\geqslant 0 \textrm{\ and\ }
(-1)^{|I|}\operatorname{acmd}(Z'_I)\geqslant 0
\textrm{\ for every\ } I \subset V(G) \textrm{\ such that\ } |I|\leqslant d+1;
\end{equation}
\begin{equation}\label{eqn9}
\textrm{there is\ } I_*=\{i_0,\dots, i_{d}\}\subset V(G) \textrm{\ such that\ }
\operatorname{acmd}(Z_{I_*})\neq 0;
\end{equation}
\begin{equation}\label{eqn10}
\operatorname{acmd}(Z_I)= \operatorname{acmd}(Z'_I) =0
\ \textrm{for every} \ I\subset V(G), \ |I|=d+2;
\end{equation}
\begin{equation}\label{eqn11}
\textrm{there is} \ \alpha>0 \ \textrm{such that} \ 
\operatorname{acmd}(Z'_I)=\alpha \operatorname{acmd}(Z_I) 
\ \textrm{for every} \ I\subset V(G), \ |I|=d+1;
\end{equation}
\begin{equation}\label{eqn12}
\left.\begin{split}
\textrm{\ for} & \textrm{\ every\ } 
i_{d+1}\in V(G)\setminus I_* \textrm{\ and every\ }
1\leqslant r\leqslant d \\
& \textrm{\ either\ }
[V_{I_*\cup\{i_{d+1}\},r}+2U_{I_*\cup\{i_{d+1}\},r}(z_{i_r,i_{d+1}})^2]
[V'_{I_*\cup\{i_{d+1}\},r}+2U'_{I_*\cup\{i_{d+1}\},r}(z'_{i_r,i_{d+1}})^2]> 0 \\
& \textrm{\ or \ }
[V_{I_*\cup\{i_{d+1}\},r}+2U_{I_*\cup\{i_{d+1}\},r}(z_{i_r,i_{d+1}})^2]^2+
[V'_{I_*\cup\{i_{d+1}\},r}+2U'_{I_*\cup\{i_{d+1}\},r}(z'_{i_r,i_{d+1}})^2]^2= 0.
\end{split}
\right\}
\end{equation}
In (\ref{eqn12}), $I_*\subset V(G)$  is the set whose existence is declared in (\ref{eqn9}).

The main result of the present article is given by the following 

\begin{thrm}\label{thrm2}
Suppose $d\geqslant 1$ is a natural number, suppose $G$ is a graph, and 
suppose $\lambda, \lambda':E(G)\to (0,+\infty)$ are two functions
which assign real numbers $\lambda_{ij}$ and $\lambda'_{ij}$ to each 
edge $(i,j)$ of $G$. 
Then the following two statements are equivalent:
\par
$(\alpha)$ there are two bar-and-joint frameworks $G(\bf{p})$ and $G(\bf{p'})$ in 
$\mathbb{R}^d$ satisfying the conditions {\em (a)--(d)} of the Problem~\ref{probl1};
\par 
$(\beta)$ the system of algebraic equalities and inequalities 
{\em (\ref{eqn6})--(\ref{eqn12})} has a solution in real numbers.
\end{thrm}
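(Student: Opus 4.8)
The plan is to prove the implications $(\alpha)\Rightarrow(\beta)$ and $(\beta)\Rightarrow(\alpha)$ separately, reading the affine transformation in condition (c) as an affinity (an invertible affine map), the usual meaning of ``affine-equivalent''. Throughout, the bridge between frameworks and variables is $z_{ij}=|{\bf p}_i-{\bf p}_j|^2$, $z'_{ij}=|{\bf p}'_i-{\bf p}'_j|^2$, so that each $\operatorname{acmd}(Z_I)$ becomes the genuine Cayley--Menger determinant of the node set $\{{\bf p}_i:i\in I\}$ and, by (\ref{eqn2}), carries its squared $d$-volume when $|I|=d+1$.

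For $(\alpha)\Rightarrow(\beta)$, I would set $z_{ij},z'_{ij}$ as above and check the system line by line. Lines (\ref{eqn6}), (\ref{eqn7}) are immediate. Both ${\bf p}$ and ${\bf p}'$ (the latter because $A$ is invertible) are isometric embeddings into $\mathbb R^d$ with affine hull $\mathbb R^d$, so Theorem~\ref{thrm1} yields (\ref{eqn8}), (\ref{eqn10}), and the full affine hull supplies (\ref{eqn9}) — take $I_*$ to index $d+1$ affinely independent nodes of ${\bf p}$, so $\operatorname{acmd}(Z_{I_*})\neq0$ by (\ref{eqn2}). For (\ref{eqn11}), write $A(x)=Lx+b$ with $\det L\neq0$: formula (\ref{eqn2}) with $k=d$ gives $\operatorname{cmd}(A(Y))=(\det L)^2\operatorname{cmd}(Y)$ for every $(d+1)$-point set $Y$, so $\alpha=(\det L)^2>0$ works uniformly. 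Finally, for (\ref{eqn12}): given $j=i_{d+1}\in V(G)\setminus I_*$ and $1\leqslant r\leqslant d$, the $d$ nodes $\{{\bf p}_{i_m}:m\neq r\}$ span a hyperplane $H$, the pair ${\bf p}_{i_r},{\bf p}_j$ falls into one of the cases (A), (B), (C) of Lemma~\ref{lemma1} relative to $H$, the affinity $A$ carries this to the same case for ${\bf p}'_{i_r},{\bf p}'_j$ and $A(H)$, and Lemma~\ref{lemma1} translates this coincidence of cases exactly into the sign alternative of (\ref{eqn12}).

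For $(\beta)\Rightarrow(\alpha)$, I would first note that (\ref{eqn6}), (\ref{eqn8}), (\ref{eqn9}), (\ref{eqn10}) are precisely the hypotheses of Theorem~\ref{thrm1} for the metric $\rho_{ij}=\sqrt{z_{ij}}$, so there is an isometric embedding ${\bf p}:G\to\mathbb R^d$ with $\operatorname{aff}({\bf p}(G))=\mathbb R^d$; applying (\ref{eqn11}) to $I=I_*$ gives $(-1)^{d+1}\operatorname{acmd}(Z'_{I_*})>0$, so the same theorem yields such a ${\bf p}'$ for $z'$. The tuples $({\bf p}_{i_r})_{r=0}^{d}$ and $({\bf p}'_{i_r})_{r=0}^{d}$ are affine bases of $\mathbb R^d$, hence there is a unique affinity $A$ with $A({\bf p}_{i_r})={\bf p}'_{i_r}$ for all $r$, and the task is to prove $A({\bf p}_j)={\bf p}'_j$ for each $j\in V(G)\setminus I_*$. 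Writing ${\bf p}_j=\sum_r t_r{\bf p}_{i_r}$ and ${\bf p}'_j=\sum_r s_r{\bf p}'_{i_r}$ in barycentric coordinates ($\sum_r t_r=\sum_r s_r=1$), affine invariance reduces the task to $t_r=s_r$ for all $r$. Comparing squared $d$-volumes through (\ref{eqn2}) gives $t_r^{2}=\operatorname{acmd}(Z_{J_r})/\operatorname{acmd}(Z_{I_*})$ and $s_r^{2}=\operatorname{acmd}(Z'_{J_r})/\operatorname{acmd}(Z'_{I_*})$ with $J_r=(I_*\setminus\{i_r\})\cup\{j\}$; since $|J_r|=|I_*|=d+1$, (\ref{eqn11}) forces $t_r^{2}=s_r^{2}$. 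For $1\leqslant r\leqslant d$, Lemma~\ref{lemma1} applied with $\Delta=\{{\bf p}_{i_m}:m\neq r\}$ and with ${\bf p}_{i_r},{\bf p}_j$ playing the roles of ${\bf p}_0,{\bf p}_{d+1}$ shows that $\operatorname{sign}(t_r)$ is determined by which of (A), (B), (C) holds for ${\bf p}$, and likewise $\operatorname{sign}(s_r)$ for ${\bf p}'$; as (\ref{eqn12}) states, again via Lemma~\ref{lemma1}, that these cases agree, we get $\operatorname{sign}(t_r)=\operatorname{sign}(s_r)$ and hence $t_r=s_r$. Finally $t_0=1-\sum_{r\geqslant1}t_r=1-\sum_{r\geqslant1}s_r=s_0$, which finishes the claim; conditions (a), (b), (d) then hold by construction, and ${\bf p}_i\neq{\bf p}_j$, ${\bf p}'_i\neq{\bf p}'_j$ on edges because $\lambda_{ij},\lambda'_{ij}>0$.

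I expect the crux to be the step from $t_r^{2}=s_r^{2}$ to $t_r=s_r$: the prescribed squared distances fix each barycentric coordinate only up to sign, and it is exactly to resolve those signs that one needs Lemma~\ref{lemma1} together with the extra hypothesis (\ref{eqn12}). Two points need care. First, the index bookkeeping: one must align the labels of the present situation with the $0,1,\dots,d,d+1$ labelling of Lemma~\ref{lemma1}, and verify its standing hypothesis that $\Delta$ spans a hyperplane — which holds because $({\bf p}_{i_r})_{r=0}^{d}$ is an affine basis. Second, (\ref{eqn12}) constrains only $r=1,\dots,d$, the remaining coordinate $t_0$ being recovered for free from $\sum_r t_r=1$. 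One should also flag why $A$ must be taken invertible: if degenerate affine maps were admitted in (c), then $(\alpha)$ would not imply $(\beta)$ — for instance with $G=K_{d+1}$, a non-degenerate $\lambda$, and a $\lambda'$ whose $d+1$ values realize only a degenerate simplex — so the theorem is to be read with the standard meaning of ``affine-equivalent''.
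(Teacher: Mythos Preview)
Your proof is correct and follows essentially the same route as the paper: both directions use Menger's theorem (Theorem~\ref{thrm1}) to pass between the algebraic conditions and isometric embeddings, identify $\alpha=(\det L)^2$ for (\ref{eqn11}), and invoke Lemma~\ref{lemma1} to handle (\ref{eqn12}).

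The only difference worth noting is in the $(\beta)\Rightarrow(\alpha)$ direction. The paper shows $A({\bf p}_j)={\bf p}'_j$ by constructing, for each $r$, a hyperplane $\widetilde{\pi}'_r$ parallel to the facet hyperplane $\pi'_r$ at a prescribed signed distance (the sign coming from (\ref{eqn12}) via Lemma~\ref{lemma1}, the magnitude from (\ref{eqn11}) via (\ref{eqn2})), argues that both $A({\bf p}_j)$ and ${\bf p}'_j$ lie in each $\widetilde{\pi}'_r$, and concludes since these hyperplanes meet in a single point. You do the equivalent computation in barycentric coordinates: $t_r^2=s_r^2$ from (\ref{eqn11}), $\operatorname{sign}(t_r)=\operatorname{sign}(s_r)$ for $r\geqslant 1$ from (\ref{eqn12}), and $t_0=s_0$ from the affine constraint $\sum t_r=\sum s_r=1$. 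This is the same geometry in different clothing, and your version has the mild advantage of making explicit why (\ref{eqn12}) needs only $r=1,\dots,d$: the $r=0$ coordinate is redundant once the others are fixed.
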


\begin{proof}
First we prove the implication $(\alpha)\Rightarrow (\beta)$.
Suppose we are given bar-and-joint frameworks 
$G(\bf{p})$ and $G(\bf{p'})$ in $\mathbb{R}^d$ satisfying the conditions 
(a)--(d) of Problem~\ref{probl1}.

For all $i,j\in V(G)$, put $z_{ij}=|{\bf p}_i-{\bf p}_j|^2$ and
$z'_{ij}=|{\bf p'}_{\!\! i}-{\bf p'}_{\!\! j}|^2$.
For all $(i,j)\in E(G)$, put $\lambda_{ij}=|{\bf p}_i-{\bf p}_j|$ and
$\lambda'_{ij}=|{\bf p'}_{\!\! i}-{\bf p'}_{\!\! j}|$.
With such a choice of the values of $z_{ij}$, $z'_{ij}$ and 
$\lambda_{ij}$, $\lambda'_{ij}$ the relations (\ref{eqn6}), (\ref{eqn7}) 
obviously hold true.
Moreover, 

{\scriptsize $\bullet$} functions $z:G\times G\to \mathbb{R}$ and 
$z':G\times G\to \mathbb{R}$, defined by the formulas $z(i,j)=z_{ij}$ and
$z'(i,j)=z'_{ij}$, are metrics on $G$; 

{\scriptsize $\bullet$} the configurations 
${\bf p}: G\to \mathbb{R}^d$ and ${\bf p'}: G\to \mathbb{R}^d$ are
isometric imbeddings of metric spaces $(G,z)$ and $(G,z')$ respectively; 

{\scriptsize $\bullet$} 
$\operatorname{aff}({\bf p}(G))=\operatorname{aff}({\bf p'}(G))=\mathbb{R}^d$;

{\scriptsize $\bullet$} for every $I=\{i_0,i_1,\dots, i_k\}\subset V(G)$ 
the equalities
\begin{equation}\label{eqn13}
\operatorname{acmd}(Z_I)=\operatorname{cmd}(\{{\bf p}_{i_0},\dots,{\bf p}_{i_k}\})
\quad\textrm{and}\quad
\operatorname{acmd}(Z'_I)=
\operatorname{cmd}(\{{\bf p'}_{\!\! i_0},\dots,{\bf p'}_{\!\! i_k}\})
\end{equation}
are satisfied.

Now we see that the statement (ii) of Theorem~\ref{thrm1}, (\ref{eqn2}), and
(\ref{eqn13}) imply (\ref{eqn8}); 
the statement (iii) of Theorem~\ref{thrm1} and (\ref{eqn13}) imply (\ref{eqn9}); 
the statement (iv) of Theorem~\ref{thrm1} and (\ref{eqn13}) imply (\ref{eqn10}); 
(\ref{eqn11}) holds true with $\alpha=(\operatorname{det}A)^2>0$ 
due to (\ref{eqn13}); and Lemma~\ref{lemma1} implies (\ref{eqn12}).
Thus the implication $(\alpha)\Rightarrow (\beta)$ is proved.

Next, we prove the implication $(\beta)\Rightarrow (\alpha)$.

Suppose $G$ is a graph and suppose the corresponding system of polynomial 
qualities and inequalities (\ref{eqn6})--(\ref{eqn12}) has a solution 
$z_{ij}$, $z'_{ij}$, $\alpha$. 
Obviously, $z=\{z_{ij}\}$ and $z'=\{z'_{ij}\}$ are semimetrics on the complete 
graph $K_{|G|}$.
The conditions (\ref{eqn8})--(\ref{eqn10}) mean that the semimetric space $(K_{|G|},z)$
satisfies the conditions (i)--(iv) of Theorem~\ref{thrm1}.
Hence, there is its isometric embedding ${\bf p}: K_{|G|}\to \mathbb{R}^d$,
${\bf p}_i={\bf p}(i)$, $i\in V(K_{|G|})$,
such that $\operatorname{aff}({\bf p}(K_{|G|}))= \mathbb{R}^d$.
Let $I_*=\{i_0,\dots,i_d\} \subset V(G)= V(K_{|G|})$  be the set whose existence is 
declared in (\ref{eqn9}). Using (\ref{eqn13}), we conclude that 
$\operatorname{aff}(\{{\bf p}_{i_0},\dots, {\bf p}_{i_d}\})= \mathbb{R}^d$, i.e.,
that the simlpex with the verticies ${\bf p}_{i_0},\dots, {\bf p}_{i_d}$ is
nondegenerate or, what is the same, $\operatorname{acmd}(Z_{I_*})\neq 0$.
Then (\ref{eqn11}) implies $\operatorname{acmd}(Z'_{I_*})\neq 0$.
Together with (\ref{eqn8}), (\ref{eqn10}), and (\ref{eqn11}), this means  
that the semimetric space $(K_{|G|},z')$
satisfies the conditions (i)--(iv) of Theorem~\ref{thrm1}.
Thus, there is its isometric embedding ${\bf p'}: K_{|G|}\to \mathbb{R}^d$,
${\bf p'}_{\!\! i}={\bf p'}(i)$, $i\in V(K_{|G|})=V(G)$.

Denote by $A: \mathbb{R}^d\to \mathbb{R}^d$ the affine transformation such that
$A({\bf p}_{i_r})={\bf p'}_{\!\! {i_r}}$ for every $i_r \in I_*$, i.e., the affine 
transformation that maps the nondegenerate simplex with the verticies 
${\bf p}_{i_0},\dots, {\bf p}_{i_d}$ to the nondegenerate simplex with the verticies 
${\bf p'}_{\!\! {i_0}},\dots, {\bf p'}_{\!\! {i_d}}$.
It remains for us to prove that the same mapping $A$ takes ${\bf p}_j$ to 
${\bf p'}_{\!\! j}$ for every $j\in V(G)$.

Suppose $j\notin I_*=\{i_0,\dots,i_d\} \subset V(G)$ and suppose $r=0,1,\dots, d$.
Denote by $\pi_r$ the affine hull of the set 
$\{{\bf p}_{i_0},\dots, {\bf p}_{i_d}\}\setminus\{{\bf p}_{i_r}\}$;
denote by $\pi'_r$ the affine hull of the set 
$\{{\bf p'}_{\!\! i_0},\dots, {\bf p'}_{\!\! i_d}\}\setminus\{{\bf p'}_{\!\! i_r}\}$;
and denote by $\operatorname{vol}_{d-1} (S)$ and $\operatorname{vol}_{d} (S)$ 
the $(d-1)$-volume and $d$-volume the set $S\subset \mathbb{R}^d$ respectively.

If ${\bf p}_{j}$ and ${\bf p}_{i_r}$ lie in the same open half-space defined by $\pi_r$,
denote by $\widetilde{\pi}'_r$ the hyperplane which is parallel to $\pi'_r$ and
lies in the half-space defined by $\pi'_r$ containing ${\bf p}_{i_r}$ at the distance
$$
h'_r\stackrel{\textrm{def}}{=} d\frac{\operatorname{vol}_d (
\operatorname{conv}(\{{\bf p'}_{\!\! j}, {\bf p'}_{\!\! i_0},\dots, 
{\bf p'}_{\!\! i_d}\}\setminus \{{\bf p'}_{\!\! i_r}\}))}{\operatorname{vol}_{d-1} 
(\operatorname{conv}(\{{\bf p'}_{\!\! i_0},\dots, {\bf p'}_{\!\! i_d}\}\setminus 
\{{\bf p'}_{\!\! i_r}\})}
$$
from $\pi'_r$.
Here $\operatorname{conv}(T)$ stands for the convex hull of the set 
$T\subset \mathbb{R}^d$.
If ${\bf p}_{j}\in \pi_r$, we put by definition $\widetilde{\pi}'_r=\pi'_r$.
If ${\bf p}_{j}$ and ${\bf p}_{i_r}$ lie in different open half-spaces defined by 
$\pi_r$, denote by $\widetilde{\pi}'_r$ the hyperplane which is parallel to $\pi'_r$ and
lies in the half-space defined by $\pi'_r$ which does not contain ${\bf p}_{i_r}$ at
distance $h'_r$ from $\pi'_r$.

Using (\ref{eqn2}) and (\ref{eqn10})--(\ref{eqn12}), it is straightforward to prove that 
${\bf p'}_{\!\! j}\in \widetilde{\pi}'_r$ and $A({\bf p}_j)\in \widetilde{\pi}'_r$.
Hence, ${\bf p'}_{\!\! j}\in \cup_{r=0}^d\widetilde{\pi}'_r$ 
and $A({\bf p}_j)\in \cup_{r=0}^d\widetilde{\pi}'_r$.
Since $\widetilde{\pi}'_r$ is parallel to $\pi'_r$ and 
$\pi'_r$ is the affine hull of a facet of a nondegenerate simplex in $\mathbb{R}^d$,
$\cup_{r=0}^d\widetilde{\pi}'_r$ contains at most one point.
Thus $A({\bf p}_j)={\bf p'}_{\!\! j}$ for all $j\in V(G)$. 
This means that the above constructed isometric embeddings ${\bf p}$ and
${\bf p'}$ are affine-equivalent.
Thus the implication $(\beta)\Rightarrow (\alpha)$ is proved.
\end{proof}

\section{Final remarks}\label{sec5}

Theorem~\ref{thrm2} allows us to solve not only Problem~\ref{probl1}, 
but also a number of related problems.
For example, in order to solve Problem~\ref{probl1} without condition (d) 
we should apply Theorem~\ref{thrm2} to every $\mathbb{R}^m$, $1\leqslant m<d$.
Another example is when we are given a framework $G(\bf{p})$ and want 
to know if there exists another geometric realization of the same graph $G$ which has
prescribed edge lengths $\lambda'_{ij}$ and is affine-equivalent to $G(\bf{p})$.
In this case, according to Theorem~\ref{thrm2} we should solve the 
system (\ref{eqn6})--({\ref{eqn12}) treating  $z_{ij}$ as prescribed real numbers
produced by $G(\bf{p})$.

According to Theorem~\ref{thrm2}, the positive solution of Problem~\ref{probl1} 
is equivalent to the existence of a solution to the system of polynomial equalities 
and inequalities (\ref{eqn6})--(\ref{eqn12}) in the set of real numbers.
It follows from the Tarski--Seidenberg Theorem \cite[Corollary 3.3.18]{Ma02} 
that unkown variables $z_{ij}$ and $z'_{ij}$, $(i,j)\notin E(G)$,
can be eliminated from (\ref{eqn6})--(\ref{eqn12}) 
so that the solvability of this system is equivalent to the validity of some 
quantifier-free Boolean formula whose atomic formulas are polynomial equations and inequalities containing only the lengths of bars 
$\lambda_{ij}$ and $\lambda'_{ij}$, $(i,j)\in E(G)$.
For this reason, we say that theoretically Problem~\ref{probl1} is always solvable.
However, the computational complexity of currently known algorithms for elimination 
of variables is so high that they cannot be used in practice. 
For this reason, we say that we cannot propose any practical algorithm for solution
of Problem~\ref{probl1}.

\subsection*{Acknowledgement}
The work was carried out in the framework of the State Task to the Sobolev 
Institute of Mathematics (Project FWNF-2022-0006).

\end{document}